\newtheorem{theorem}{Theorem}[section]
\newtheorem{lemma}[theorem]{Lemma}
\newtheorem{definition}{Definition}[section]
\makeatletter\renewcommand{\@biblabel}[1]{#1.}\makeatother
\newtcolorbox{empheqboxed}{colback=gray!20, 
	colframe=white,
	width=\textwidth,
	sharpish corners,
	top=0mm, 
	bottom=0pt
}
\title{Bailey Pairs for the Tetrahedron Index}
\author{Ilmar Gahramanov$^{a,c,d}$, Sinan Ulaş Öztürk$^{a}$, Uveys Turhan$^{a,b}$}
\affiliation{$^a$ Department of Physics, Bogazici University,\\ 34342 Bebek, Istanbul, Türkiye\\[-0.5cm]

$^b$ Department of Mathematics, Bogazici University,\\ 34342 Bebek, Istanbul, Türkiye\\[-0.5cm]
	
$^c$ Center for Mathematics and its Applications, Khazar University, \\ Mehseti St. 41, AZ1096, Baku, Azerbaijan\\[-0.5cm]

$^d$ Steklov Mathematical Institute of Russian Academy of Sciences, 119991 Moscow, Russia}
\emailAdd{ilmar.gahramanov@bogazici.edu.tr, ulasoztrk@gmail.com, uveys.turhan@std.bogazici.edu.tr}
\abstract{In this work, we develop new Bailey pairs for the pentagon identity satisfied by the tetrahedron index, expressible in terms of $q$-series. Since the tetrahedron index underlies topological invariants of 3-manifolds and related knots, our construction may offer a new framework to deriving knot invariants through the Bailey chain.}
\keywords{pentagon identity, supersymmetric duality, Bailey pairs, trigonometric hypergeometric function, knot invariants}
\begin{document}
	\maketitle
	\flushbottom
	
\section{Introduction}

Bailey's lemma is a useful method for systematically deriving hypergeometric identities. The Bailey construction also appears in computations of supersymmetric gauge theories. Its relevance was first recognized in \cite{Berkovich:1995xg,andrews1998trinomial}, and later extended to supersymmetric quiver gauge theories\footnote{It also appears in conformal theories; see, e.g., \cite{Foda:1994wz,Chim:1996rc,Berkovich:1996hb}.} \cite{Kashaev:2012cz,Gahramanov:2015cva,Brunner:2017lhb,Gahramanov:2021pgu,Gahramanov:2022jxz,Catak:2022glx,Akbulut:2025kow}. 

In this work, we construct novel Bailey pairs for the pentagon identity for the tetrahedron index, which can be written in terms of q-hypergeometric functions. Similar Bailey pair constructions for different integral pentagon identities have been previously studied in \cite{Gahramanov:2021pgu,Gahramanov:2022jxz}.

Let $q, z \in \mathbb{C}$ with $|q| < 1$. We define the q-Pochhammer symbol
\begin{gather}\label{q-poch}
    (z;q)_{n}:=\prod^{n-1}_{k=0}(1-zq^k)
\end{gather}
The tetrahedron index ${I_{\triangle}}(m,e)$, first introduced in \cite{ Dimofte:2011py}, and further studied in \cite{Garoufalidis2012The3I, GaroufalidisYu_2024_3DIndexSkein, GaroufalidisKashaev_2019_Meromorphic3DIndex} is given by the following expression
\begin{equation}\label{tetindex}
{I_{\triangle}}(m,e) :=\sum_{n=\frac{1}{2}(|e|-e)}^{\infty}\frac{(-1)^n q^{\frac12 n (n+1)-(n+\frac12 e)m}}{(q;q)_n (q; q)_{n+e}} \;
\end{equation}

The tetrahedron arises in the context of superconformal index for  three-dimensional $\mathcal N=2$ supersymmetric gauge theories, see, e.g. \cite{Gahramanov:2014ona}. Serving as a fundamental building block for the topological index associated with the ideal triangulation of $3$-manifolds, it was introduced in \cite{Dimofte:2011ju, Dimofte:2011py} as an invariant of compact, orientable $3$-manifolds with non-empty boundary. One can also employ the tetrahedron index to construct knot invariants. For instance, for the \( 4_1 \) knot, the corresponding invariant is given in \cite{Dimofte:2011py, Garoufalidis:2022mbi, Garoufalidis:2023wez}
\[
\text{Ind}_{4_1}(q) = \sum_{k_1, k_2 \in \mathbb{Z}} I_{\Delta}(k_1, k_2) \, I_{\Delta}(k_2, k_1)
= 1 - 8q - 9q^{2} + 18q^{3} + 46q^{4} + \cdots .
\]

\begin{theorem}
Let $m_i, e_i \in \mathbb{Z}$, then ${I_{\triangle}}(m,e)$  obeys the pentagon identity  \cite{Dimofte:2011py, Garoufalidis2012The3I}:
    \begin{equation}
         \label{pentagon}
{I_{\triangle}}(m_1-e_2,e_1){I_{\triangle}}(m_2-e_1,e_2)
 = \ \sum_{e_3} q^{e_3} {I_{\triangle}}(m_1,e_1+e_3){I_{\triangle}}(m_2,e_2+e_3){I_{\triangle}}(m_3,e_3)
\end{equation}
with the balancing condition is
\begin{equation}\label{condition}
 \newline
 m_1 + m_2 = m_3
\end{equation}

\end{theorem}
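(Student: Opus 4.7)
The plan is to attack this by direct substitution of the series definition \eqref{tetindex} into both sides and then collapsing the extra summations on the right using an elementary $q$-series identity, with the balancing condition $m_1+m_2=m_3$ being the single input that makes the reduction work. Concretely, I would first expand the left-hand side into a double sum over indices $n_1,n_2$ coming from the two $I_{\triangle}$ factors, and in parallel expand the right-hand side into a quadruple sum over $n_1,n_2,n_3$ and $e_3$. The aim is to perform one of the summations on the right-hand side in closed form and match the result term-by-term to the double sum on the left.

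The key computational step is to use $m_3=m_1+m_2$ to rewrite the exponent of $q$ on the right-hand side so that $m_3$ is eliminated and the $e_3$-dependent terms separate into a controllable piece (in particular, the cross-terms between $e_3$ and the $m_i$'s should combine into an overall $q^{e_3 m_3}$ that cancels against $q^{e_3}$ only once the balancing condition is applied). After a reindexing of the form $n_i\mapsto n_i-e_3$ designed to absorb the shifts inside the Pochhammers $(q;q)_{n_i+e_i+e_3}$, the inner sum over $e_3$, or equivalently the combined sum over $n_3$ and $e_3$, should collapse via a classical identity; the most natural candidates are Euler's identity
\begin{equation*}
\sum_{k\geq 0}\frac{(-1)^{k}\,q^{\binom{k}{2}}\,z^{k}}{(q;q)_{k}}=(z;q)_{\infty},
\end{equation*}
Jacobi's triple product, or a $q$-Chu--Vandermonde/$q$-Saalschütz summation. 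After that collapse, what remains ought to coincide with the double sum produced by the LHS expansion.

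\textbf{Main obstacle.} The principal difficulty is bookkeeping the summation ranges: the lower bound $\tfrac{1}{2}(|e|-e)$ in \eqref{tetindex} is piecewise-defined in the sign of $e$, and each factor $I_{\triangle}(m_i,e_i+e_3)$ on the right contributes a constraint of the form $n_i\geq\max(0,-e_i-e_3)$ whose threshold shifts with $e_3$. Handling these case distinctions cleanly while exchanging the order of summation is where the argument is most likely to go astray. A cleaner route, which I would pursue in parallel, is to first observe that $I_{\triangle}(m,e)$ admits a bilateral representation in which the summation index runs over all of $\mathbb{Z}$, the terms with $n<\tfrac{1}{2}(|e|-e)$ vanishing automatically because $1/(q;q)_{n+e}=0$ at negative integer arguments. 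With that symmetric form in hand, the pentagon identity reduces to a formal manipulation of bilateral $q$-series in which $m_1+m_2=m_3$ appears precisely as the balancing condition of a $q$-Saalschütz-type summation. As an independent sanity check at each stage, I would verify the identity order-by-order in $q$, which is straightforward given the explicit series definition and rules out sign or range errors before attempting the closed-form reduction.
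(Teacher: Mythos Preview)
The paper does not supply a proof of this theorem. It is stated with citations to \cite{Dimofte:2011py, Garoufalidis2012The3I} and then taken as input: after a change of variables and an application of the triality property (\ref{triality}) it is rewritten as (\ref{pentagonfinal}), and that rewritten form is the sole ingredient in the proof of the paper's actual result, Lemma~2.1. There is therefore no argument in the paper to compare your proposal against.

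For what it is worth, your outline is a plausible direct attack and is in the spirit of the arguments in the cited references. The bilateral reformulation via the convention $1/(q;q)_k=0$ for negative integers $k$ is exactly the right move to sidestep the piecewise lower bounds, and it is how these computations are normally organised. The step you leave open---naming the specific classical summation that collapses the inner sum---is the substantive part of the proof; in practice the cleanest route is through generating functions, expressing $I_{\triangle}(m,e)$ as a Fourier coefficient of a ratio of infinite $q$-Pochhammers and reducing (\ref{pentagon}) to a product identity for those (equivalently, the pentagon relation for the quantum dilogarithm). Your list of candidate identities is reasonable, but as it stands the proposal is a strategy rather than a proof: to complete it you must identify the precise summation and carry the reduction through.
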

Note that the pentagon identity (\ref{pentagon}) represents the mirror symmetry between $d=3$, ${\mathcal N}=2$ supersymmetric quantum electrodynamics with $U(1)$ gauge group and $N_f = 1$ in IR fixed point and its mirror partner, the free Wess-Zumino theory (also known as XYZ model) \cite{Dimofte:2011py}. From a topological perspective, the pentagon identity can be interpreted as a Pachner $3-2$ move for triangulated $3$-manifolds, as an example, different gluings of the tetrahedra into the bipyramid lead to different UV descriptions of the same theory,  as mentioned in \cite{Dimofte:2011py, Kashaev:2012cz}.

The tetrahedron index has the triality property, proven in the appendix of \cite{Dimofte:2011py}
\begin{equation}\label{triality}
    {I_{\triangle}}(m,e) = (-q^{\frac{1}{2}})^m {I_{\triangle}}(-e-m,e) = (-q^{\frac{1}{2}})^e {I_{\triangle}}(e,-e-m)
\end{equation}
Before constructing the Bailey pairs, we need to manipulate (\ref{pentagon}) to obtain a more convenient form. Let $e_0 \in \mathbb{Z}$, performing the shifts: $e_3 \xrightarrow{} e_3 + e_0$,  $e_1 \xrightarrow{} e_1 - e_0$, $e_2 \xrightarrow{} e_2 - e_0$ on (\ref{pentagon}) and using the triality property on the left-hand side, give the following form of the pentagon identity, which will be used for the construction of Bailey pairs in the next section
\begin{multline}\label{pentagonfinal}
    -q^{-\frac{3}{2}e_0}{I_{\triangle}}(m_1 - e_2 + e_0,e_1 - e_0){I_{\triangle}}(-m_2 + e_1 - e_2, m_2 - e_1 + e_0)= \\ 
    \ \sum_{e_3} q^{e_3 + \frac{1}{2}(m_2 - e_1)} {I_{\triangle}}(m_1, e_1 + e_3) {I_{\triangle}}(m_2, e_2 + e_3) {I_{\triangle}}(m_1 + m_2 ,e_0 + e_3)  \;. \end{multline}

\section{Bailey Pairs}

Introduced by Bailey, Bailey Lemma is a useful way to derive Rogers-Ramanujan type identities \cite{Bailey1946SomeII, Bailey1948IdentitiesOT}. Following Bailey's work, Andrews discovered an iterative method to derive new pairs from a known pair\footnote{ Spiridonov generalized the method for integral identities \cite{Spiridonov2003ABT, Spiridonov2002AnEI}.} \cite{Andrews1984MULTIPLESR}. The the generalized version of the Bailey chain is a couple of infinite sequences of holomorphic functions $\{\alpha^{(i)}_n\}_{n \geq 0}$ and $\{\beta^{(i)}_n\}_{n \geq 0}$ such that there exists an identity independent of \textit{i} which connect $\alpha^{(i)}_n$ and $\beta^{(i)}_n$ as
\begin{equation}
    \beta^{(i)}_n=F_n(\alpha^{(i)}_0, \alpha^{(i)}_1,..., \alpha^{(i)}_n) \; \text{,}
\end{equation}
where $F$ can be an operator which may now include sum or integrals. Here, $\alpha^{(i)}_n$ and $\beta^{(i)}_n$ are constructed according to
\begin{align}
    \alpha^{(i)}_n=G(\alpha^{(i)}_0, \alpha^{(i)}_1,..., \alpha^{(i)}_{n-1}) \; \text{,}\\
    \beta^{(i)}_n=H(\beta^{(i)}_0, \beta^{(i)}_1,..., \beta^{(i)}_{n-1}) \; \text{,}
\end{align}
where $G$ and $H$ represent integral-sum operators.
\begin{definition}
Let $\{\alpha_m(t)\}_{m \in \mathbb{Z}}$ and $\{\beta_m(t)\}_{m \in \mathbb{Z}}$ be two sequences of functions. They are said to form a Bailey pair with respect to the parameter $t$ if
\begin{align}\label{baileydef1}
        \beta_k (t)= \sum_{n \in \mathbb{Z}} I_{\triangle}(t, n + k) \alpha_n(t)
\end{align}
\end{definition}
\begin{lemma}
If $\{\alpha_m(t)\}_{m \in \mathbb{Z}}$ and $\{\beta_m(t)\}_{m \in \mathbb{Z}}$ form a Bailey pair with respect to $t$, then the following sequences
\begin{align}
            & \alpha'_n(t + s)= -q^{-\frac{3}{2}p} I_{\triangle}(3t - s + n,2s - t  - n)\alpha_n(t) \label{baileydef_a} \\ 
            & \nonumber \beta'_m(s + t)=\sum_{k \in \mathbb{Z}}q^{k + \frac{m}{2}}  I_{\triangle}(-m - 2s + 2t,2s - t + k) \\
            & \qquad \qquad \qquad \qquad \times I_{\triangle}(m + 2s - t,k - m - s - t) \beta_k(t)
            \label{baileydef2}
\end{align}
\vspace{-0.5cm}
form a Bailey pair with respect to the parameter $t + s$.
\end{lemma}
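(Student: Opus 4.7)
The plan is to verify the Bailey-pair identity
\[\beta'_m(s+t) = \sum_{n \in \mathbb{Z}} I_{\triangle}(t+s,\, n+m)\, \alpha'_n(t+s)\]
directly: substitute the given definitions of $\alpha'_n$ and $\beta'_m$, unfold the assumed pair via $\beta_k(t) = \sum_n I_{\triangle}(t, n+k)\alpha_n(t)$, swap the order of summation, and collapse the resulting inner sum using the manipulated pentagon identity (\ref{pentagonfinal}).

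Concretely, I first expand $\beta'_m(s+t)$ from (\ref{baileydef2}) and replace $\beta_k(t)$ by its Bailey expansion to obtain a double sum over $n$ and $k$. Assuming the standard absolute convergence of the tetrahedron index for $|q|<1$ (which permits Fubini), I swap sums so that the $k$-summation is innermost. The inner sum takes the shape
\[\sum_{k \in \mathbb{Z}} q^{k+m/2}\, I_{\triangle}(-m-2s+2t,\, 2s-t+k)\, I_{\triangle}(m+2s-t,\, k-m-s-t)\, I_{\triangle}(t,\, n+k),\]
which I match against the right-hand side of (\ref{pentagonfinal}) under the assignment $e_3 \leftrightarrow k$ and
\[(m_1, e_1) = (-m-2s+2t,\, 2s-t),\qquad (m_2, e_2) = (m+2s-t,\, -m-s-t),\qquad e_0 = n.\]
A short check confirms $m_1+m_2 = t$ (the required $m$-argument of the third factor) and $q^{e_3+(m_2-e_1)/2} = q^{k+m/2}$.

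Computing the left-hand-side arguments of (\ref{pentagonfinal}) with these values gives $m_1-e_2+e_0 = 3t-s+n$, $e_1-e_0 = 2s-t-n$, $-m_2+e_1-e_2 = s+t$, and $m_2-e_1+e_0 = m+n$, so the inner sum collapses to $-q^{-3n/2}\, I_{\triangle}(3t-s+n,\, 2s-t-n)\, I_{\triangle}(s+t,\, m+n)$. Reinstating the outer sum over $n$, the prefactor $-q^{-3n/2}\, I_{\triangle}(3t-s+n, 2s-t-n)$ multiplied by $\alpha_n(t)$ is exactly $\alpha'_n(t+s)$ from (\ref{baileydef_a}) (interpreting the shift parameter there as $n$, the analog of $e_0$ in (\ref{pentagonfinal})), while the remaining factor $I_{\triangle}(s+t, m+n)$ is $I_{\triangle}(t+s, n+m)$. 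This is precisely the Bailey relation at parameter $t+s$.

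The main obstacle is pinning down the correct pairing of the three right-hand factors of (\ref{pentagonfinal}) with the three $I_{\triangle}$ factors in the inner $k$-sum: several orderings are formally available, and only the assignment above is simultaneously consistent with the balancing $m_1+m_2 = t$ and with the $q$-exponent. Once the matching is fixed, the remainder is bookkeeping, modulo the convergence argument justifying the exchange of summations.
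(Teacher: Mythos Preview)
Your proof is correct and follows essentially the same route as the paper: expand $\beta'_m$ via the assumed Bailey relation to a double sum, then collapse the inner sum using (\ref{pentagonfinal}) under exactly the variable assignment the paper uses (your $(m_1,e_1,m_2,e_2,e_0,e_3)$ coincide with theirs). You supply more explicit verification of the argument matching and note the Fubini justification, but the underlying argument is the same.
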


\begin{proof} We need to show that
\begin{align}
     \beta'_k (s + t)= \sum_{n \in \mathbb{Z}} I_{\triangle}(t + s, n + k) \alpha'_n(s + t)
     \label{baileydef3}
\end{align}
Inserting (\ref{baileydef1}) in (\ref{baileydef2}), we first calculate the left-hand side of the equality (\ref{baileydef3})
\begin{multline} \label{bpr}
    \beta'_m(s + t)= 
    \sum_{n \in \mathbb{Z}} \sum_{k \in \mathbb{Z}}q^{k + \frac{m}{2}}  I_{\triangle}(-m - 2s + 2t,2s - t + k)\\ \times I_{\triangle}(m + 2s - t,k -m - s - t)  I_{\triangle}(t, n + k) \alpha_n(t) \end{multline}

Upon renaming the variables as
\begin{align}
    &m = m_2 - e_1  \qquad \qquad \quad \qquad \; \; \;
    2t -m - 2s = m_1 \\ 
    &2s - t + k = e_1 + e_3   \qquad \; \; \; \; \; \; \quad
    m + 2s - t = m_2 \\
    &k - m - s - t= e_2 + e_3  \; \; \; \;  \qquad
    t = m_1 + m_2 \\
    &n + k = e_0 + e_3
\end{align}
we identify the right-hand side of the pentagon identity (\ref{pentagonfinal}). After inserting (\ref{baileydef_a}) and the left-hand side of (\ref{pentagonfinal}) in (\ref{bpr}), we get the desired equality.
\begin{gather} \nonumber
    \beta'_m(s + t)=  \sum_{n \in \mathbb{Z}} -q^{-\frac{3}{2}n}I_{\triangle}(3t - s + n,2s - t  - p)I_{\triangle}(t+s, n+m)\alpha_n(t) \\
       \xrightarrow{} \beta'_m(s + t) =  \sum_{n \in \mathbb{Z}} I_{\triangle}(t+s, n+m)\alpha'_n(t)
\end{gather}
\end{proof}

\section{Conclusions}

In this work, we have constructed new Bailey pairs associated with the pentagon identity of the tetrahedron index. The construction follows a similar approach to that developed in \cite{Gahramanov:2021pgu}, and it would be interesting to extend the Bailey framework to other dualities discussed in \cite{Gahramanov:2022jxz,Bozkurt:2020gyy} in terms of the tetrahedron index. This approach may provide a systematic way to uncover new supersymmetric quiver dualities.

We would like to note that a possible direction for future research is the construction of the star–triangle relation for certain integrable lattice models using these Bailey pairs.

\section*{Acknowledgements} 

We would like to thank all participants of the seminar series “Istanbul Integrability Initiative” (https://istringy.org) at Boğaziçi University (Istanbul, Türkiye). Ilmar Gahramanov is supported by the 3501–TÜBİTAK Career Development Program under grant number 122F451. The work of Ilmar Gahramanov is also partially supported by the Russian Science Foundation under grant number 22-72-10122. Ilmar Gahramanov would like to thank the Nesin Mathematics Village (Şirince, Türkiye) for its hospitality, where part of this work was carried out. Sinan Ulaş Öztürk is supported by the 2209-A TÜBİTAK National/International Research Projects Fellowship Programme for Undergraduate Students under grant number 1919B012430350.

	\bibliographystyle{JHEP} 
	\bibliography{references}
\end{document}